\documentclass{amsart}
\usepackage[T1]{fontenc}
\usepackage[utf8]{inputenc}
\usepackage{lmodern}
\usepackage{amsmath,amssymb,amsthm,bbm}
\usepackage[english]{babel}

\DeclareMathOperator{\Idem}{Idem}

\newcommand{\shape}[1]{\mathcal{S}(#1)}

\newcommand{\communiquelambda}{\stackrel{\lambda}{\leftrightarrow}}
\newcommand{\communiquelambdap}{\stackrel{\lambda'}{\leftrightarrow}}

\newcommand{\N}{\mathbb N}

\newcommand{\Zd}{\mathbb{Z}^d}

\newcommand{\R}{\mathbb{R}}
\newcommand{\Rd}{\mathbb{R}^d}
\renewcommand{\P}{\mathbb{P}}
\newcommand{\E}{\mathbb{E}}
\newcommand{\Ed}{\mathbb{E}^d}
\newcommand{\eps}{\varepsilon}

\newcommand{\Ebarre}{\overline{\mathbb{E}}}
\newcommand{\Pbarre}{\overline{\mathbb{P}}}

\renewcommand{\epsilon}{\varepsilon}
\renewcommand{\phi}{\varphi}
\renewcommand{\limsup}{\overline{\lim}}

\newcommand{\ie}{\emph{i.e. }}

\newcommand{\miniop}[3]{%
\renewcommand{\arraystretch}{0.6}
\begin{array}{c}
{\scriptstyle #1}\\
#2\\
{\scriptstyle #3}
\end{array}
\renewcommand{\arraystretch}{1}}

\newcommand{\1}{1\hspace{-1.3mm}1}

\newtheorem{theorem}{Theorem}
\newtheorem{lemme}[theorem]{Lemma}

\newtheorem{coro}[theorem]{Corollary}
\newtheorem{prop}[theorem]{Proposition}

\def\restriction#1#2{\mathchoice
              {\setbox1\hbox{${\displaystyle #1}_{\scriptstyle #2}$}
              \restrictionaux{#1}{#2}}
              {\setbox1\hbox{${\textstyle #1}_{\scriptstyle #2}$}
              \restrictionaux{#1}{#2}}
              {\setbox1\hbox{${\scriptstyle #1}_{\scriptscriptstyle #2}$}
              \restrictionaux{#1}{#2}}
              {\setbox1\hbox{${\scriptscriptstyle #1}_{\scriptscriptstyle #2}$}
              \restrictionaux{#1}{#2}}}
\def\restrictionaux#1#2{{#1\,\smash{\vrule height .8\ht1 depth .85\dp1}}_{\,#2}}

\begin{document}

\title[Continuity of the asymptotic shape]{Continuity of the asymptotic shape of the supercritical contact process}

{
\author{Olivier Garet}
\address{
Universit\'e de Lorraine, Institut \'Elie Cartan de Lorraine, UMR 7502, Vandoeuvre-l{\`e}s-Nancy, F-54506, France\\
\and \\
CNRS, Institut \'Elie Cartan de Lorraine, UMR 7502, Vandoeuvre-l{\`e}s-Nancy, F-54506, France\\
}
\email{Olivier.Garet@univ-lorraine.fr}

\author{R{\'e}gine Marchand}
\address{
Universit\'e de Lorraine, Institut \'Elie Cartan de Lorraine, UMR 7502, Vandoeuvre-l{\`e}s-Nancy, F-54506, France\\
\and \\
CNRS, Institut \'Elie Cartan de Lorraine, UMR 7502, Vandoeuvre-l{\`e}s-Nancy, F-54506, France\\
}
\email{Regine.Marchand@univ-lorraine.fr}

\author{Marie Th{\'e}ret}
\address{LPMA, Universit\'e Paris Diderot, 5 rue Thomas Mann, 75205 Paris Cedex 13, France}
\email{marie.theret@univ-paris-diderot.fr}
}

\def\motsclefs{contact process, shape theorem, continuity.}
\subjclass[2000]{60K35, 82B43.}
\keywords{\motsclefs}

\begin{abstract}
We prove the continuity of the shape governing the asymptotic growth of the supercritical contact process in $\Zd$, with respect to the infection parameter. The proof is valid in any dimension $d \ge 1$.
\end{abstract}

\maketitle


\section{Introduction}

The contact process is a famous interacting particle system modelling the spread of an infection on the sites of $\Zd$. The evolution depends on a fixed parameter $\lambda \in (0, +\infty)$ and is as follows: at each moment, an infected site becomes healthy at rate $1$ while a healthy site becomes infected at a rate equal to $\lambda$ times the number of its infected neighbors.
There exists a critical value $\lambda_c(\Zd)$ such that the infection, starting from the origin, infinitely expands with positive probability if and only if $\lambda>\lambda_c(\Zd)$.

Durrett and Griffeath~\cite{MR656515} proved that when the contact process on $\Zd$ starting from the origin survives, the set of sites occupied before time $t$ satisfies an asymptotic shape theorem, as in first-passage percolation. In~\cite{GM-contact}, two of us extended this result to the case of the contact process in a random environment. 
The shape theorem can be stated as follows: provided that $\lambda>\lambda_c(\Zd)$, there exists a norm $\mu_{\lambda}$ on $\Rd$ such that the set $H_t$ of points already infected before time $t$ satisfies:
$$\Pbarre_{\lambda}\left(\exists T>0:  \quad  t\ge T \; \Longrightarrow \;  (1-\epsilon)t\shape{\lambda}\subset \tilde{H_t} \subset (1+\epsilon)t\shape{\lambda}\right)=1,$$
where $\tilde{H_t}=H_t+[0,1]^d$, $\shape{\lambda}$ is the unit ball for $\mu_{\lambda}$ and $\Pbarre_{\lambda}$ is the law of the contact process with parameter $\lambda$, starting from the origin and conditioned to survive. The growth of the contact process is thus asymptotically linear in time, and governed by the shape $\shape{\lambda}$. 

The aim of this note is to prove the continuity of the map 
$\lambda\mapsto \shape{\lambda}$. More precisely, we prove the following result:
\begin{theorem} 
For every  $\lambda>\lambda_c(\Zd)$,
$\displaystyle \lim_{\lambda'\to \lambda} \sup_{x \in \mathbb S^{d-1}}|\mu_{\lambda'}(x)-\mu_{\lambda}(x)|=0,$ \\
where $\displaystyle \mathbb S^{d-1}=\left\{x=(x_i)_{1 \le i\le d} \in \Rd: \; \|x\|_1=\sum_{i=1}^d |x_i|=1\right\}$.
\end{theorem}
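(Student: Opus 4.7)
The plan is to exploit the monotonicity coming from the graphical (Harris) construction, and then to rule out potential jumps of the resulting monotone function from both sides.

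First, the graphical construction yields a natural coupling: for $\lambda \le \lambda'$, one realizes the $\lambda$-process by retaining each transmission arrow of the $\lambda'$-graphical representation independently with probability $\lambda/\lambda'$. This gives $H_t^\lambda \subseteq H_t^{\lambda'}$ for every $t$ almost surely, and therefore $\mu_\lambda(x) \ge \mu_{\lambda'}(x)$ pointwise on $\Rd$. Thus for each fixed $x$, the function $f_x : \lambda \mapsto \mu_\lambda(x)$ is non-increasing on $(\lambda_c(\Zd),+\infty)$, so its one-sided limits $f_x(\lambda^\pm)$ exist at every supercritical $\lambda$ and satisfy $f_x(\lambda^-) \ge f_x(\lambda) \ge f_x(\lambda^+)$. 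Continuity at $\lambda$ then reduces to proving the two reverse bounds $f_x(\lambda^+) \ge f_x(\lambda)$ (continuity from the right) and $f_x(\lambda^-) \le f_x(\lambda)$ (continuity from the left).

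For continuity from the right, I would fix a sequence $\lambda_n \downarrow \lambda$ and work in the coupled representation where the $\lambda$-process is obtained from the $\lambda_n$-process by thinning. A near-geodesic infection path in the $\lambda_n$-process reaching $nx$ in time $\approx n \mu_{\lambda_n}(x)$ uses only $O(n)$ arrows; those deleted by the thinning form a sparse subset of density $(\lambda_n-\lambda)/\lambda_n \to 0$, and each resulting gap can be repaired by a local restart of the $\lambda$-process, whose completion time has tight tails because $\lambda$ is supercritical. Bookkeeping the number and durations of these restarts via Chernoff and Borel--Cantelli estimates then yields $\mu_\lambda(x) \le \mu_{\lambda_n}(x) + o(1)$. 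For continuity from the left I would argue symmetrically: given $\lambda_n \uparrow \lambda$, build the $\lambda$-process from the $\lambda_n$-graphical construction by superposing extra arrows of rate $\lambda - \lambda_n$, and replace each use of such an extra arrow along a near-geodesic $\lambda$-path by a short $\lambda_n$-detour. Since $\lambda_n > \lambda_c(\Zd)$ for all $n$ large, such detours exist with overwhelming probability and the same bookkeeping produces a $\lambda_n$-path reaching $nx$ in duration at most $n \mu_\lambda(x) + o(n)$.

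Once pointwise convergence of $\mu_{\lambda'}(x)$ at each $x \in \mathbb{S}^{d-1}$ is established, uniform convergence on $\mathbb{S}^{d-1}$ follows from a standard equicontinuity argument: by monotonicity and a crude a priori upper bound on the infection speed, the norms $\mu_{\lambda'}$ for $\lambda'$ in a small neighborhood of $\lambda$ are uniformly comparable to $\|\cdot\|_1$, hence equi-Lipschitz on the unit sphere, and the pointwise convergence upgrades to uniform convergence on the compact $\mathbb{S}^{d-1}$. The step I expect to be the main obstacle is the gap-filling argument underlying both one-sided continuities: one needs quantitative tail estimates on the restart times that are uniform in $\lambda'$ near the fixed supercritical $\lambda$. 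These should be accessible through a Bezuidenhout--Grimmett block construction coupled with the large-deviation machinery developed in \cite{GM-contact}, choosing block parameters at $\lambda$ and transferring them to all $\lambda'$ in a small neighborhood by continuity of the underlying finite-volume block probabilities.
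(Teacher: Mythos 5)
Your overall skeleton is the same as the paper's: establish monotonicity of $\lambda\mapsto\mu_\lambda(x)$ from the Harris coupling, prove one-sided continuity at each $x$ from each side, and upgrade to uniformity on $\mathbb S^{d-1}$ via equi-Lipschitzness of the norms $\mu_\lambda$ in a neighbourhood. That last upgrade is correct and is essentially what appears at the end of Section~3. But the core of your argument — the ``path repair'' for both one-sided limits — is different from the paper's and is the step where a genuine gap remains. The paper's left continuity does \emph{not} repair paths: it uses the essential hitting time $\sigma_\lambda(x)$, its representation $\mu_\lambda(x)=\inf_k (M_1+\Ebarre_\lambda\sigma_\lambda(kx))/k$ as a decreasing infimum, and a ``local agreement'' coupling: on a high-probability finite space--time box the Poisson arrows for $\lambda$ and $\lambda'$ coincide (Lemma~\ref{continuite-des-boites}), and $\sigma_\lambda$ is determined by a finite box together with survival, so $\sigma_{\lambda'}=\sigma_\lambda$ there; then one exchanges two infima. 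The paper's right continuity does not repair either: it proves, via the block event and Peierls argument of~\cite{GM-contact-gd}, an upper large-deviations bound showing that for $\lambda$ slightly above $\lambda_0$ the $\lambda$-process is still contained in $(1+\eps)t\,\shape{\lambda_0}$, then intersects this with the shape theorem for $\lambda$ to get $\shape{\lambda}\subset(1+\eps)\shape{\lambda_0}$. The continuity-in-$\lambda$ of the finite-box block probability is exactly Lemma~\ref{continuite-des-boites}, so you have correctly identified the key local-continuity ingredient.

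The problem with the repair argument as stated is that it treats the contact process like first-passage percolation, where deleting a few edges along a geodesic leaves static gaps that can be patched by short detours. Here the object is an oriented space--time path, and when you delete an infection arrow the trajectory downstream of the gap is not simply ``waiting to be rejoined'': the restart from the last live site can \emph{die} (with probability bounded away from zero even in the supercritical regime), it must reconnect to a target which has meanwhile drifted away at linear speed, and the number and locations of the deleted arrows are not independent of the near-geodesic you conditioned on, so a clean Chernoff bound on the total repair time requires care. Controlling these restarts is exactly what the essential hitting time $\sigma_\lambda$ formalises (it is a regeneration time with uniform moment bounds~\eqref{asigma}), and that is why the paper routes the argument through it rather than through path surgery. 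The same objections apply, in mirror form, to the detour argument you propose for the left side. So the proposal is a correct plan at the level of ``what must be shown'' but it leaves the central difficulty — quantitative, $\lambda$-uniform reconnection in the space--time oriented setting — unresolved, and as written it would not go through without importing something equivalent to the essential-hitting-time machinery you cite at the end.
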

It is then easy to deduce the following continuity for the asymptotic shape.
Denote by $d_H$ the Hausdorff distance between non-empty compact sets in $\Rd$. For every  $\lambda>\lambda_c(\Zd)$,
$$\lim_{\lambda'\to \lambda} d_H(\shape{\lambda'},\shape{\lambda})=0.$$

Continuity properties for asymptotic shapes in random growth models have already been investigated.
In first passage percolation, perhaps the most famous random growth model, 
Cox and Kesten \cite{MR588407,MR633228,kesten} proved that the time constant is continuous with respect to the distribution of the passage-time of an edge. 
In a forthcoming paper, Garet, Marchand, Procaccia and Th\'eret~\cite{GMPT} extend their result  to the case of possibly infinite passage times by renormalization techniques.

In these two cases, thanks to a good subadditivity property, the quantity whose continuity is studied appears as an infimum of a decreasing sequence of continuous functions, which gives quite easily one half of the continuity.

Because of the possibility of extinction of the contact process, the subadditivity properties are not so obvious and we thus use the essential hitting time presented in Garet--Marchand~\cite{GM-contact}. 

Note that the one-dimensional case is simpler because the growth of the supercritical contact process in dimension 1 is characterized by the right-edge velocity: its continuity is proved in Liggett~\cite{MR776231}, Theorem~3.36. See also Durrett \cite{MR757768} for an analogous result about 2D oriented percolation.

In Section~2, we introduce the notation,  build contact processes with distinct infection parameters on the same space thanks to the Harris construction and recall the definition and properties of the essential hitting time introduced in \cite{GM-contact}. Section~3 is devoted to the proof of the left-continuity, while in Section~4 we prove the right-continuity.


\section{Notation and known results}

%
%
We work on the grid $\Zd$, with $d\ge 1$, and we put an edge between any pair of sites at distance $1$ for $\|.\|_1$. We denote by $\Ed$ the set of these edges.

To define the contact process, we use the Harris construction~\cite{MR0488377}. It allows to couple contact processes starting from distinct initial configurations and distinct parameters $\lambda \in(0, \lambda_{\max}]$, where $\lambda_{\max}>0$ is fixed and finite, by building them from a single collection of Poisson measures on~$\R_+$.

\subsection{Construction of the Poisson measures }

We endow $\R_+$ with the Borel $\sigma$-algebra $\mathcal B(\R_+)$, and we denote by $M$ the set of locally finite counting measures $m=\sum_{i=0}^{+\infty} \delta_{t_i}$. We endow this set with the $\sigma$-algebra $\mathcal M$ generated by the maps $m\mapsto m(B)$, where $B$ describes the set of Borel sets in $\R_+$. 

As the continuity is a local property, it will be sufficient in the sequel to build a coupling for contact processes with parameters in $(0,\lambda_{\max}]$, for a fixed and well chosen $\lambda_{\max}>0$. We define the measurable space $(\Omega, \mathcal F)$ by setting
$$\Omega=M^{\Ed}\times M^{\Zd}\times ([0,\lambda_{\max}]^{\N})^{\Ed} \text{ and } \mathcal F=\mathcal{M}^{\otimes \Ed} \otimes \mathcal{M}^{\otimes \Zd}\otimes  ([0,\lambda_{\max}]^{\otimes \N})^{\otimes\Ed}.$$
On this space, we consider the  probability measure defined by 
$$\P=\mathcal{P}_{\lambda_{\max}}^{\otimes\Ed} \otimes \mathcal{P}_1^{\otimes\Zd} \otimes  (U([0,\lambda_{\max}])^{\otimes \N})^{\otimes\Ed},$$
where, for every $\lambda\in\R_+$, $\mathcal{P}_{\lambda}$ is the law of a Poisson point process on $\R_+$ with intensity $\lambda$ and $U([a,b])$ is the uniform law on the compact set $[a,b]$. 

Fix an edge $e$. Denoting by $(S_i^e)_{i\ge 1}$ the atoms of $\omega_e$, we build the classical coupling between the Poisson measures of the infection processes with different parameters $\lambda \in (0, \lambda_{\max}]$. Define 
$$m_\lambda^e=m_{\lambda}(\omega_e,(U_i^e)_{i \ge 1}) =\sum_{i=1}^{+\infty}\1_{\{U_i^e\le\frac{\lambda}{\lambda_{\max}}\}} \delta_{S_i^e}.$$
Under $\P$, the random variable $m_{\lambda}$ is a Poisson point process with parameter $\lambda$.
We then define, for $\lambda \le \lambda_{\max}$, the application
$$\begin{array}{rcl}
\Psi_\lambda: \Omega & \longrightarrow & M^{\Ed}\times M^{\Zd} \\
((\omega_e)_{e \in \Ed}, (\omega_z)_{z \in \Zd},(U^i_e)_{e\in\Ed,i\ge 1})  & \longmapsto & ((m_{\lambda}(\omega_e,(U_i^e)_{i \ge 1}))_{e \in \Ed}, (\omega_z)_{z \in \Zd}).
\end{array}$$
The law of $\Psi_\lambda$ under $\P$ is then
$$\P_\lambda= \mathcal{P}_{\lambda}^{\otimes\Ed} \otimes \mathcal{P}_1^{\otimes\Zd}.$$
We thus recover infection processes, indexed by $\Ed$, with parameter $\lambda$ and recovering processes, indexed by $\Zd$, with parameter 1.
Note that the Poisson measures for recoverings, $(\omega_z)_{z \in \Zd}$, do not depend on $\lambda$. The following lemma will be useful to compare the evolution of two contact processes with different parameters.

\begin{lemme}
\label{continuite-des-boites}
Let $t>0$ and let $S$ be a finite subset of $\Ed$.
Assume $0<\lambda'\le\lambda \le \lambda_{\max}$ and note
$$\Idem(S,t,\lambda,\lambda')=\miniop{}{\cap}{e\in S}\left\{\restriction{m^e_{\lambda}}{[0,t]}= \restriction{m^e_{\lambda'}}{[0,t]}\right\}.$$
For each $\epsilon>0$, there exists $\delta=\delta(S,t,\epsilon)>0$ such that
$$\forall \lambda,\lambda' \in (0, \lambda_{\max}] \quad | \lambda'-\lambda| \le \delta \; \Rightarrow \; \P(\Idem(S,t,\lambda,\lambda'))\ge 1-\epsilon.$$
\end{lemme}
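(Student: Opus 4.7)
The statement should follow directly from the thinning structure built into the Harris coupling, and my plan is to first analyse a single edge, then exploit independence across edges, and finally pick $\delta$.

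For a fixed edge $e \in S$ I would observe that, with $\lambda' \le \lambda$, the measures $m_\lambda^e$ and $m_{\lambda'}^e$ differ exactly at those atoms $S_i^e \in [0,t]$ of $\omega_e$ for which the auxiliary uniform variable $U_i^e$ falls in the ``disagreement region'' separating the threshold for $\lambda$ from the threshold for $\lambda'$, which has probability $(\lambda-\lambda')/\lambda_{\max}$. Conditionally on $\omega_e$, the number of discrepancy atoms in $[0,t]$ is therefore binomial with $\omega_e([0,t])$ trials and success probability $(\lambda-\lambda')/\lambda_{\max}$; integrating out $\omega_e$ (whose restriction to $[0,t]$ is Poisson of parameter $\lambda_{\max} t$) and using the classical thinning property, the number of discrepancies in $[0,t]$ is Poisson with parameter $(\lambda-\lambda')t$. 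Consequently
\[
\P\bigl(\restriction{m^e_\lambda}{[0,t]}=\restriction{m^e_{\lambda'}}{[0,t]}\bigr)= e^{-(\lambda-\lambda')t}.
\]

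Next, since the families $(\omega_e,(U_i^e)_{i\ge 1})$ are independent across edges, the events above are jointly independent, and multiplying over $e \in S$ gives
\[
\P(\Idem(S,t,\lambda,\lambda')) = e^{-|S|(\lambda-\lambda')t}.
\]
To conclude, I would pick $\delta=-\log(1-\epsilon)/(|S|t)$ (or any smaller positive number), so that $|\lambda'-\lambda|\le\delta$ forces $\P(\Idem(S,t,\lambda,\lambda'))\ge 1-\epsilon$, uniformly over admissible pairs $(\lambda,\lambda')$.

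I do not expect a serious obstacle here: everything reduces to the thinning property of Poisson processes and the product structure of $\P$. The only points requiring care are the bookkeeping of the uniform-variable thresholds (which is why the factor $\lambda_{\max}$ cancels) and the finiteness of $S$, which is what allows the product over $e$ to be controlled by a single $\delta$.
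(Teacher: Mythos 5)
Your proof is correct, and it takes a slightly different---and in fact sharper---route than the paper. The paper bounds $\P(\Idem(S,t,\lambda,\lambda')^c)$ from above by first applying a union bound over $e\in S$, then applying Markov's inequality $\P(D^e_t\ge 1)\le \E(D^e_t)$ to the discrepancy count $D^e_t$, arriving at the linear estimate $\P(\Idem^c)\le |S|t\,|\lambda-\lambda'|$ and choosing $\delta$ accordingly. You instead observe that, by the marked-Poisson thinning property, the discrepancy count on each edge in $[0,t]$ is \emph{exactly} Poisson with parameter $|\lambda-\lambda'|t$, and that independence of the families $(\omega_e,(U_i^e)_{i\ge 1})$ across edges gives the exact product formula $\P(\Idem)=e^{-|S|\,|\lambda-\lambda'|\,t}$. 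Both arguments are valid and rely on the same underlying structure; yours yields an identity rather than a one-sided bound (and hence a slightly larger admissible $\delta$), at the modest cost of invoking the thinning theorem and verifying that the marks $U_i^e$ are i.i.d.\ and independent of $\omega_e$, which the construction indeed guarantees. The paper's route avoids thinning entirely and only needs a first-moment computation, so it is marginally more elementary; since only the existence of some $\delta>0$ is needed, either suffices.
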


\begin{proof} Let $\lambda,\lambda' \in (0, \lambda_{\max}]$, and assume without loss of generality that $\lambda \le \lambda'$.

For each $e\in\Ed$ and $t>0$, set
\begin{align*}
D^e_t & =\sum_{i=1}^{+\infty}\1_{\{\frac{\lambda'}{\lambda_{\max}}<U_i^e\le\frac{\lambda}{\lambda_{\max}}\}}\1_{\{S_i^e\le t\}}, \\
\text{then } \E (D^e_t) & = \frac{\lambda -\lambda'}{\lambda_{\max}}\E(\omega_e([0,t]))=\frac{\lambda -\lambda'}{\lambda_{\max}}\lambda_{\max}t= (\lambda'-\lambda)t.
\end{align*}
Now,  
\begin{align*}
\P(\Idem(S,t,\lambda,\lambda')^c) & \le \sum_{e \in S} \P(\restriction{m^e_{\lambda}}{[0,t]} \ne \restriction{m^e_{\lambda'}}{[0,t]}) \\
&\le \sum_{e \in S}\P(D^e_t\ge 1) \le \sum_{e \in S}\E (D^e_t) \le |S|t (\lambda'-\lambda),
\end{align*}
so we can take $\delta=1/({t|S|\epsilon})$.
\end{proof}
 
\subsection{Graphical construction of the contact process} 
\label{SubS:constr}

This construction is exposed in all details in Harris~\cite{MR0488377}; we just give here an informal description. 
Suppose that $\lambda\in (0,\lambda_{\max}]$ is fixed.
Let $\omega=((\omega_e)_{e \in \Ed}, (\omega_z)_{z \in \Zd},(U^i_e)_{e\in\Ed,i\ge 1}) \in \Omega$. Above each site $z \in \Zd$, we draw a time line $\R_+$, and we put a cross at the times given by $\omega_z$, corresponding to  potential recoverings at site $z$. Above each edge $e \in \Ed$, we draw at the times given by $m_{\lambda}((\omega_e)_{e \in \Ed},(U^i_e)_{e\in\Ed,i\ge 1})$ an horizontal segment between the extremities of the edge, corresponding to a potential infection through edge $e$ (remember we fix the infection rate $\lambda$). 

An open path follows the time lines above sites -- but crossing a cross is forbidden -- and uses horizontal segments to jump from a time line to a neighboring time line: in this description, the evolution of the contact process looks like a percolation process, oriented in time but not in space. 
For $x,y \in \Zd$ and $t \ge 0$, we say that $\xi_t^{\lambda,x}(y)=1$ if and only if there exists an open path from $(x,0)$ to $(y,t)$, then we define:
\begin{eqnarray}
\xi_t^{\lambda,x} & = & \{y \in \Zd: \; \xi_t^{\lambda,x}(y)=1\}, \nonumber \\
\forall A \in \mathcal P(\Zd) \quad  \xi_t^{\lambda,A} & = & \bigcup_{x \in A} \xi_t^{\lambda,x}. \label{additivite}
\end{eqnarray}
For instance, we obtain
\begin{eqnarray*}
(A \subset B, \quad \lambda \le \lambda') &  \Rightarrow  & (\forall t \ge 0\quad \xi_t^{\lambda,A} \subset \xi_t^{\lambda',B}).
\end{eqnarray*}

Harris proved that under $\P$, or under $\P_\lambda$, the process $(\xi^{\lambda,A}_t)_{t \ge 0}$ is the contact process with infection rate $\lambda$, starting from initial configuration $A$. 

\subsection{Translations}

For $t \ge 0$, we define the translation operator $\theta_t$ on a locally finite counting measure $m=\sum_{i=1}^{+\infty} \delta_{t_i}$ on $\R_+$ by setting
$$\theta_t m=\sum_{i=1}^{+\infty} \1_{\{t_i\ge t\}}\delta_{t_i-t}.$$
The translation $\theta_t$ induces an operator on $\Omega$, still denoted by $\theta_t$: \\
for every $\omega=((\omega_e)_{e \in \Ed}, (\omega_z)_{z \in \Zd},(U^i_e)_{e\in\Ed,i\ge 1}) \in \Omega$, we set
$$ \theta_t(\omega)=((\theta_t \omega_e)_{e \in \Ed}, (\theta_t \omega_z)_{z \in \Zd},(U^{i+\omega_e([0,t])}_e)_{e\in\Ed,i\ge 1}  ).$$
Since the Poisson point processes are translation invariant and $\omega_e([0,t])$ is independent from the $(U^i_e)$'s, $\P$ and  $\P_\lambda$ are  invariant under~$\theta_t$.

There is also a natural action of $\Zd$ on $\Omega$, which preserves $\P$ and  $\P_\lambda$, and which consists in changing  the observer's point of view:
for   $x \in \Zd$, we define the translation operator~$T_x$ by setting: 
$$\forall \omega \in \Omega\quad T_x (\omega)=( (\omega_{x+e})_{e \in \Ed}, ( \omega_{x+z})_{z \in \Zd},(U^i_{x+e})_{e\in\Ed;i\ge 1}),$$
where $x+e$ the edge $e$ translated by vector $x$.

\subsection{Notation and classical estimates for the contact process}

For a set $A \subset \Zd$, we define the life time $\tau^A_{\lambda}$ of the process starting from $A$ by
$$\tau^A_{\lambda}=\inf\{t\ge0: \; \xi_t^{\lambda,A}=\varnothing\}. $$
If $y\in\Zd$, we write  $\tau^y_{\lambda}$ instead of  $\tau^{\{y\}}_{\lambda}$ and we simply write $\tau_{\lambda}$ for $\tau^0_{\lambda}$. We also note 
$$\{\tau_\lambda=+\infty\}=\{0 \communiquelambda \infty\}.$$
The critical parameter for the contact process in $\Zd$ is then
$$\lambda_c(\Zd) =\inf\{\lambda>0: \; \P_\lambda(\tau_{\lambda}=+\infty)>0\} \in (0,+\infty).$$
Define, for $\lambda>\lambda_c(\Zd)$, the following conditional probability
$$ \Pbarre_\lambda(.)=\P_\lambda(.|\tau_{\lambda}=+\infty)=\frac{\P(\; . \; \cap \{0 \communiquelambda \infty\})}{\P(0 \communiquelambda \infty)}.$$

For $A \subset \Zd$ and $x \in \Zd$,  we also define the first infection time $t^A_{\lambda}(x)$ of site $x$ from set $A$ by 
$$t^A_{\lambda}(x)=\inf\{t\ge 0: \; x \in \xi_t^{\lambda,A}\}.$$
If $y\in\Zd$, we write  $t^y_{\lambda}(x)$ instead of  $t^{\{y\}}_{\lambda}(x)$ and we simply write $t_{\lambda}(x)$ for $t^0_{\lambda}(x)$. The set of points infected before time $t$ is then 
\begin{align*}
H^{\lambda}_t & =  \{x\in\Zd: \; t_{\lambda}(x)\le t\}\quad \text{and}\; \tilde{H}^{\lambda}_t=H_t+[0,1]^d.
\end{align*}

The following estimates are classical for the supercritical contact process; they are stated here with some extra uniformity in the parameter $\lambda$ (this uniformity is mainly obtained by stochastic comparison):
\begin{prop}[Proposition 5 in Garet--Marchand~\cite{GM-contact}]
\label{propuniforme}\label{propmoments}
Let $\lambda_{\min},\lambda_{\max}$ with $\lambda_c(\Zd)<\lambda_{\min}\le \lambda_{\max}$. 
There exist $A,B,C,c,\rho>0$ such that for every
$\lambda\in [\lambda_{\min},\lambda_{\max}]$, for every  $x \in \Zd$, for every  $ t\ge0$, 
\begin{eqnarray}
\P(\tau_{\lambda}=+\infty) & \ge & \rho,
\label{uniftau} \\
\P(H^{\lambda}_t \not\subset [-Ct,Ct]^d ) & \le & A\exp(-Bt), 
\label{richard} \\
\P ( t<\tau_{\lambda}<+\infty) &\le&  A\exp(-Bt),\label{grosamasfinis} \\
 \P\left( t_{\lambda}(x)\ge \frac{\|x\|}c+t,\; \tau_{\lambda}=+\infty \right) & \le & A\exp(-Bt).
\label{retouche}
\end{eqnarray}
\end{prop}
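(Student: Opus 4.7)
The plan is to establish the four uniform estimates separately, leveraging two principles that work together: the stochastic monotonicity of the Harris-coupled contact processes in $\lambda$ (recalled at the end of Section~\ref{SubS:constr}), and the availability of \emph{non-uniform} classical exponential bounds at each fixed supercritical parameter. Throughout, we fix $\lambda_c(\Zd)<\lambda_{\min}\le\lambda_{\max}$ and let $\lambda$ range in $[\lambda_{\min},\lambda_{\max}]$.

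For \eqref{uniftau}, the Harris coupling gives $m^e_{\lambda_{\min}}\le m^e_\lambda$ as point measures while recovery measures do not depend on $\lambda$, so $\xi^{\lambda_{\min},0}_t\subset\xi^{\lambda,0}_t$ for every $t\ge 0$. Hence $\tau_\lambda\ge\tau_{\lambda_{\min}}$ pointwise and
$\P(\tau_\lambda=+\infty)\ge\P(\tau_{\lambda_{\min}}=+\infty)=:\rho,$
which is positive since $\lambda_{\min}>\lambda_c(\Zd)$. Estimate \eqref{richard} is handled in the same spirit but in the opposite direction: $H^{\lambda}_t\subset H^{\lambda_{\max}}_t$ by monotonicity, and $(H^{\lambda_{\max}}_t)_{t\ge 0}$ is stochastically dominated by the range of a continuous-time branching random walk on $\Zd$ with jump rate $2d\lambda_{\max}$. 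A standard exponential Chebyshev/moment-generating-function estimate on the maximal displacement of this dominating process yields a constant $C=C(\lambda_{\max})$ and $A,B>0$ for which \eqref{richard} holds uniformly in $\lambda$.

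The genuinely delicate estimate is \eqref{grosamasfinis}, because monotonicity points the \emph{wrong} way here: a larger $\lambda$ produces longer-surviving processes, not shorter-surviving finite ones. The strategy is to invoke the block construction of Bezuidenhout--Grimmett: for $\lambda>\lambda_c(\Zd)$ the rescaled process dominates a supercritical oriented percolation, and the event $\{t<\tau_\lambda<+\infty\}$ forces a cascade of block failures whose number is linear in $t$. The uniformity comes from the fact that the block-success probabilities are monotone non-decreasing in $\lambda$ under the Harris coupling, so that block sizes and success thresholds chosen to make the renormalized percolation supercritical at $\lambda_{\min}$ automatically work for every $\lambda\ge\lambda_{\min}$. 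The resulting geometric-decay rate can then be chosen uniformly on $[\lambda_{\min},\lambda_{\max}]$, yielding \eqref{grosamasfinis}. Finally, \eqref{retouche} is deduced by a standard restart argument: on $\{\tau_\lambda=+\infty\}$, every infected point either transmits along an infinite open path or initiates a finite sub-cluster; repeated restarts combined with \eqref{richard} (which gives a deterministic upper speed, hence a lower speed $c>0$ of propagation of the infinite component) and \eqref{grosamasfinis} (to discard short-lived branches) produce the claimed tail bound on $t_\lambda(x)-\|x\|/c$.

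The main obstacle is the uniformity in \eqref{grosamasfinis}: the Bezuidenhout--Grimmett machinery is quantitative but its parameters degenerate as $\lambda\downarrow\lambda_c$, so one must verify that they can be \emph{frozen} at their values for $\lambda_{\min}$ without losing monotonicity in $\lambda$. Once this is carried out, the other three bounds follow with constants depending only on $\lambda_{\min}$ and $\lambda_{\max}$, as asserted.
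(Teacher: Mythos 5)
You should note first that the paper does not prove this proposition at all: it is quoted verbatim from Proposition 5 of Garet--Marchand \cite{GM-contact}, with only the remark that the uniformity in $\lambda$ is ``mainly obtained by stochastic comparison''. Measured against that, your outline for \eqref{uniftau}, \eqref{richard} and \eqref{grosamasfinis} is essentially the standard route and is sound: the Harris coupling gives $\tau_\lambda\ge\tau_{\lambda_{\min}}$ and $H^\lambda_t\subset H^{\lambda_{\max}}_t$ pointwise, which settles \eqref{uniftau} and reduces \eqref{richard} to a fixed-parameter linear-growth bound, and the observation that the Bezuidenhout--Grimmett block events are increasing in the infection arrows, so that block sizes tuned at $\lambda_{\min}$ remain good for all $\lambda\in[\lambda_{\min},\lambda_{\max}]$, is exactly the mechanism by which the uniformity is obtained in the cited source.

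The genuine gap is in your treatment of \eqref{retouche}. The parenthetical claim that \eqref{richard} ``gives a deterministic upper speed, hence a lower speed $c>0$ of propagation'' is a non sequitur: an at-most-linear growth bound says nothing about how fast the surviving process reaches $x$, and no combination of restarts with \eqref{richard} and \eqref{grosamasfinis} alone produces the bound $t_\lambda(x)\le\|x\|/c+t$ up to exponentially small probability. The positive speed is the hard, quantitative content of \eqref{retouche}; it comes from the linear spread (with exponential error bounds) of the supercritical oriented percolation embedded via the block construction, i.e.\ from the same renormalization machinery you invoke for \eqref{grosamasfinis}, not from the upper speed. Moreover, uniformity in $\lambda$ is also delicate here: although $t_\lambda(x)\le t_{\lambda_{\min}}(x)$ pointwise, the event in \eqref{retouche} is intersected with $\{\tau_\lambda=+\infty\}$, which grows with $\lambda$, so one cannot simply transfer the estimate from $\lambda_{\min}$ by monotonicity; one must rerun the block/restart argument with parameters frozen at $\lambda_{\min}$, as is done in \cite{GM-contact}. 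As written, your proof of \eqref{retouche} would fail, while the other three estimates are correctly argued.
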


\subsection{Essential hitting times and shape theorem}
We now recall the definition of the essential hitting time $\sigma_{\lambda}(x)$. It was introduced in \cite{GM-contact} to prove an asymptotic shape result for the supercritical contact process in random environment. See also Garet--Marchand~\cite{GM-contact-gd} and Garet--Gou\'er\'e--Marchand~\cite{GGM} for further uses. The essential hitting time $\sigma_{\lambda}(x)$ is a time when the site $x$ is infected from the origin $0$ and also has an infinite life time. It is defined through a family of stopping times as follows: we set $u_0(x)=v_0(x)=0$ and we define recursively two increasing sequences of stopping times $(u_n(x))_{n \ge 0}$ and $(v_n(x))_{n \ge 0}$ with
$u_0(x)=v_0(x)\le u_1(x)\le v_1(x)\le u_2(x)\dots$:
\begin{itemize}
\item Assume that $v_k(x)$ is defined. We set $u_{k+1}(x)  =\inf\{t\ge v_k(x): \; x \in \xi^{\lambda,0}_t \}$. 
\item Assume that $u_k(x)$ is defined, with $k \ge 1$. We set $v_k(x)=u_k(x)+\tau^x_{\lambda}\circ \theta_{u_k(x)}$.
\end{itemize}
We then set
\begin{equation}
\label{definitiondeK}
K_{\lambda}(x)=\min\{n\ge 0: \; v_{n}(x)=+\infty \text{ or } u_{n+1}(x)=+\infty\}.
\end{equation}
This quantity represents the number of steps before we stop: either we stop because we have just found an infinite $v_n(x)$, which corresponds to a time $u_n(x)$ when $x$ is occupied and has infinite progeny, or we stop because we have just found an infinite $u_{n+1}(x)$, which says that after $v_n(x)$, site $x$ is never infected anymore. When the contact process survives, the second case does almost surely not occur.

In \cite{GM-contact}, using \eqref{grosamasfinis} and \eqref{retouche}, it is proved that $K_{\lambda}(x)$ is almost surely finite, which allows to define the essential hitting time $\sigma_{\lambda}(x)$ by setting $$\sigma_{\lambda}(x)=u_{K_{\lambda}(x)}.$$  It is of course larger than the hitting time $t_{\lambda}(x)$ and can been seen as a regeneration time.
At the same time, we define the operator $\tilde \Theta_x^{\lambda}$ on $\Omega$ by:
\begin{equation*}
\tilde \Theta_x^{\lambda} = 
\begin{cases} T_{x} \circ \theta_{\sigma_{\lambda}(x)} & \text{if $\sigma_{\lambda}(x)<+\infty$,}
\\
T_x &\text{otherwise.}
\end{cases}
\end{equation*}
The advantage of the essential hitting time $\sigma_{\lambda}(x)$ is that it enjoys, unlike $t_{\lambda}(x)$, 
some good invariance and integrability properties when conditioned to survive.  
We now recall the main results of \cite{GM-contact} we will need here. In the following, we fix $\lambda_{\min},\lambda_{\max}>0$ such that $\lambda_c(\Zd)<\lambda_{\min}\le \lambda_{\max}$.

\begin{prop}[Garet--Marchand \cite{GM-contact}, Theorems 1 and 3, Corollary 21, Theorem~22 and Lemma 29] $\;$ 
\begin{itemize}

\item
For each $\lambda>\lambda_c(\Zd)$, for every $x \in \Zd$, 
\begin{equation}  \label{invariance} 
\text{the probability measure $\Pbarre_\lambda$ is invariant under the map $\tilde \Theta_x^{\lambda}.$}
\end{equation}

\item There exist constants $(C_p)_{p\ge 1}$ such that for every
$\lambda\in [\lambda_{\min},\lambda_{\max}]$, for every  $x \in \Zd$, for every  $p\ge 1$,
\begin{equation} \label{asigma}
\Ebarre_{\lambda} [\sigma_{\lambda}(x)^p ] \le  C_p (1+\|x\|)^{p}. 
\end{equation}

\item
For each $\lambda>\lambda_c(\Zd)$, for every $x \in \Zd$, there exists a deterministic $\mu_{\lambda}(x)$ such that
\begin{equation} \label{basique}
\lim_{n \to +\infty} \frac{t_{\lambda}(nx)}n = \lim_{n \to +\infty}\frac{\sigma_{\lambda}(nx)}n=\mu_{\lambda}(x).
\end{equation}
The convergence holds $\Pbarre_{\lambda}$ almost surely, and also in $L^1(\Pbarre_{\lambda})$. 

\item 
The function $x\mapsto \mu_{\lambda}(x)$ can be extended to a norm on $\Rd$. Let 
\begin{equation}
\shape{\lambda}=\{x \in \Rd: \; \mu_\lambda(x)\le 1\}.
\end{equation}

\item 
For every $\epsilon>0$, $\Pbarre_{\lambda}-a.s.$, for every~$t$ large enough,
\begin{equation}
\label{leqdeforme}
(1-\epsilon)\shape{\lambda}\subset \frac{\tilde H^{\lambda}_t}t\subset (1+\epsilon)\shape{\lambda}.
\end{equation}
\end{itemize}
\end{prop}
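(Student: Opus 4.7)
The plan is to establish the five items in the listed order, as each feeds the next; the main obstacle is the invariance in (i), after which everything follows from Kingman's subadditive ergodic theorem and the moment bound (ii).

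For the invariance statement, the design of $\sigma_{\lambda}(x)$ is crucial: on the event $\{\tau_\lambda=+\infty\}$, the estimates \eqref{grosamasfinis}--\eqref{retouche} used in the definition of $K_\lambda(x)$ force $K_\lambda(x)<+\infty$ and $\sigma_\lambda(x)<+\infty$ almost surely, and at time $\sigma_\lambda(x)$ site $x$ is infected \emph{and} has infinite progeny. I would verify invariance by showing, for bounded measurable $F$,
\[
\E\bigl[F\circ T_x\circ \theta_{\sigma_\lambda(x)}\,\1_{\{\tau_\lambda=+\infty\}}\bigr]
=\E\bigl[F\,\1_{\{\tau_\lambda=+\infty\}}\bigr].
\]
The left-hand side is decomposed according to the values $n$ of $K_\lambda(x)$, writing $\sigma_\lambda(x)=u_n(x)$ on $\{K_\lambda(x)=n\}$. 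Since each $u_n(x)$ is a stopping time of the Harris system, the strong Markov property applies: after $\theta_{u_n(x)}$, the environment is an independent copy of the original, and $T_x$ brings $x$ to the new origin. The event $\{\tau_\lambda=+\infty\}$ reorganizes as ``$x$ had infinite progeny in the original system'', which the construction of $K_\lambda$ pins down on the stratum $\{K_\lambda(x)=n\}$. Summing over $n$ cancels the decomposition and leaves the right-hand side.

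For (ii), I would write
\[
\sigma_\lambda(x)=\sum_{k=0}^{K_\lambda(x)-1}\bigl[(u_{k+1}(x)-v_k(x))+(v_k(x)-u_k(x))\bigr],
\]
where, by the strong Markov property, the finite-excursion lengths $v_k(x)-u_k(x)$ have the law of $\tau_\lambda^x$ conditioned to be finite, hence exponential tails uniform in $\lambda\in[\lambda_{\min},\lambda_{\max}]$ via \eqref{grosamasfinis}; the waiting times $u_{k+1}(x)-v_k(x)$ are dominated by $t_\lambda(x)$ restarted at $v_k(x)$ and have the polynomial-in-$\|x\|$ control given by \eqref{retouche}; and the number of terms $K_\lambda(x)$ is stochastically dominated by a geometric random variable with success probability at least $\rho$ by \eqref{uniftau}. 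Combining these yields finite moments of every order, uniformly in $\lambda$, with the announced $(1+\|x\|)^p$ growth.

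For (iii)--(v) I would apply Kingman's subadditive ergodic theorem to $(\sigma_\lambda(nx))_{n\ge 0}$. The subadditivity
\[
\sigma_\lambda((m+n)x)\le \sigma_\lambda(mx)+\sigma_\lambda(nx)\circ \tilde\Theta_{mx}^\lambda
\]
holds pathwise because at time $\sigma_\lambda(mx)$ the site $mx$ is occupied with infinite progeny, so the shifted construction of $\sigma_\lambda(nx)$ starting from $mx$ provides a valid candidate time for $(m+n)x$. Invariance from (i) plus ergodicity of $\tilde\Theta_x^\lambda$ under $\Pbarre_\lambda$ (which I would obtain from the mixing of the underlying Poisson environment, checking that the invariant $\sigma$-field is $\Pbarre_\lambda$-trivial) combined with the moment bound (ii) yield a deterministic limit $\mu_\lambda(x)$. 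The identification with $\lim t_\lambda(nx)/n$ uses $0\le \sigma_\lambda(nx)-t_\lambda(nx)\le $ a sum of $K_\lambda(nx)$ finite excursion lengths, which is $o(n)$ a.s.\ by Borel--Cantelli and (ii). Subadditivity and lattice symmetries extend $\mu_\lambda$ to a seminorm on $\Q^d$, and positivity $\mu_\lambda(x)\ge \|x\|/C$ follows from \eqref{richard}, so $\mu_\lambda$ extends by uniform continuity to a norm on $\Rd$. The shape theorem (v) then follows by the classical ray-by-ray argument: a finite $\eps$-net of rational directions in $\mathbb S^{d-1}$ gives uniform convergence of $\sigma_\lambda(\lfloor ty\rfloor)/t$ to $\mu_\lambda(y)$, the inner inclusion $(1-\eps)t\shape\lambda\subset \tilde H_t^\lambda$ uses $t_\lambda\le \sigma_\lambda$ and Lipschitz continuity of $\mu_\lambda$, and the outer inclusion $\tilde H_t^\lambda\subset (1+\eps)t\shape\lambda$ is enforced by \eqref{richard} to discard atypical rapid spreads.
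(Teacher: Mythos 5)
Your outlines of items (i) and (ii) are essentially the route of Garet--Marchand \cite{GM-contact}: decompose over the strata $\{K_\lambda(x)=n\}$, exploit that the $u_n(x)$ are stopping times (whereas $\sigma_\lambda(x)$ itself is not) together with the strong Markov property and translation invariance, and control $K_\lambda(x)$ geometrically via \eqref{uniftau} and the excursion and waiting times via \eqref{grosamasfinis}--\eqref{retouche} (for the waiting times $u_{k+1}-v_k$ one actually needs a version of \eqref{retouche} uniform in the initial configuration, which is available in \cite{GM-contact}). Up to these glossed details, this part is sound.

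The genuine gap is in your passage to (iii)--(v). The pathwise inequality $\sigma_\lambda((m+n)x)\le \sigma_\lambda(mx)+\sigma_\lambda(nx)\circ\tilde\Theta_{mx}^{\lambda}$ is not justified and is in general false: your argument tacitly uses the characterization of $\sigma_\lambda(z)$ as the \emph{first} time at which $z$ is infected from $0$ with infinite progeny, but $\sigma_\lambda(z)$ is defined through the specific restart sequence $(u_k(z),v_k(z))$, which can skip such times. Concretely, if $T=\sigma_\lambda(mx)+\sigma_\lambda(nx)\circ\tilde\Theta_{mx}^{\lambda}$ falls in an interval $[u_{k}((m+n)x),v_{k}((m+n)x))$ whose attempt has finite progeny, and the infinite-progeny infection of $(m+n)x$ at time $T$ does not pass through $((m+n)x,u_k)$, then $\sigma_\lambda((m+n)x)\ge u_{k+1}>T$. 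This is precisely the central difficulty of \cite{GM-contact}: there, only an \emph{almost} subadditive inequality $\sigma(x+y)\le\sigma(x)+\sigma(y)\circ\tilde\Theta_x+r(x,y)$ with a moment-controlled error term is proved, Kingman's theorem is not applicable, and an almost-subadditive ergodic theorem tailored to this setting is used instead; the additive constant $M_1$ appearing in \eqref{cestuninf} of the present paper is a visible trace of this defect. Two further points in the same step are also unsupported: Kingman would require the array to be stationary under iterates of a single measure-preserving map, but $\tilde\Theta_{mx}^{\lambda}$ is not the $m$-th iterate of $\tilde\Theta_{x}^{\lambda}$ (the maps do not satisfy $\tilde\Theta_x\circ\tilde\Theta_y=\tilde\Theta_{x+y}$ precisely because exact subadditivity fails), and the ergodicity of $\tilde\Theta_x^{\lambda}$ under $\Pbarre_\lambda$, which you invoke to make the limit deterministic, is itself a substantial statement that cannot be dismissed by appealing to mixing of the Poisson environment, since the shift involves the random time $\sigma_\lambda(x)$. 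So (iii)--(v) need the almost-subadditive machinery of \cite{GM-contact}, not a direct application of Kingman's theorem.
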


It has also been noted in \cite{GM-contact} (see proof of Lemma 25 there) that there exists $M_1>0$ such that, for each $\lambda\in [\lambda_{\min},\lambda_{\max}]$ and each $x\in\Zd\backslash\{0\}$, the sequence $(\Ebarre_{\lambda}\sigma_{\lambda}(nx)+M_1)_{n\ge 1}$ is subadditive. Thus, with~\eqref{basique}, we can represent  $\mu_{\lambda}(x)$ as the following infimum:
\begin{equation}
\label{cestuninf}
\forall \lambda\in [\lambda_{\min},\lambda_{\max}] \quad \forall x \in \Zd \quad \mu_{\lambda}(x)=\miniop{}{\inf}{n\ge 1}\frac{M_1+\Ebarre_{\lambda}(\sigma_{\lambda}(nx))}n.
\end{equation}

As a corollary of \eqref{basique}, we obtain the following monotonicity property:

\begin{coro}
\label{coromonotonie}
For each $x\in\Zd$, $\lambda\mapsto \mu_{\lambda}(x)$ is non-increasing on $(\lambda_c(\Zd),+\infty)$.
\end{coro}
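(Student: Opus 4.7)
The plan is to deduce monotonicity from the pointwise coupling of the processes built by the Harris construction together with the almost sure convergence given by \eqref{basique}. Fix $x\in\Zd$ and $\lambda_c(\Zd)<\lambda\le\lambda'$, both at most $\lambda_{\max}$ (which we can take equal to $\lambda'$).

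First I would observe that by construction of the coupling in Section 2.2, the Poisson recovery measures $(\omega_z)_{z\in\Zd}$ are the same under $\Psi_\lambda$ and $\Psi_{\lambda'}$, while the infection measures satisfy $m_\lambda^e\le m_{\lambda'}^e$ as point measures on $\R_+$, because $\lambda/\lambda_{\max}\le\lambda'/\lambda_{\max}$. Hence every open path in the graphical representation with parameter $\lambda$ remains an open path with parameter $\lambda'$, which yields $\xi_t^{\lambda,0}\subset\xi_t^{\lambda',0}$ for all $t\ge0$, $\P$-almost surely. In particular, pointwise on $\Omega$,
\begin{equation*}
t_{\lambda'}(nx)\le t_{\lambda}(nx) \quad\text{and}\quad \{\tau_\lambda=+\infty\}\subset\{\tau_{\lambda'}=+\infty\}.
\end{equation*}

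Next I would transfer the almost-sure convergence \eqref{basique} to the coupled probability space $(\Omega,\mathcal F,\P)$. By \eqref{basique}, there exists an event $\Omega_\lambda$ with $\Pbarre_\lambda(\Omega_\lambda)=1$ on which $t_\lambda(nx)/n\to\mu_\lambda(x)$; equivalently, $\P(\Omega_\lambda\cap\{0\communiquelambda\infty\})=\P(0\communiquelambda\infty)>0$. The same convergence at parameter $\lambda'$ furnishes $\Omega_{\lambda'}$ with $\P(\Omega_{\lambda'}\cap\{0\communiquelambdap\infty\})=\P(0\communiquelambdap\infty)>0$. Since $\{0\communiquelambda\infty\}\subset\{0\communiquelambdap\infty\}$, the event
\begin{equation*}
\Omega_\lambda\cap\Omega_{\lambda'}\cap\{0\communiquelambda\infty\}
\end{equation*}
has positive $\P$-probability (it differs from $\Omega_\lambda\cap\{0\communiquelambda\infty\}$ by a $\P$-null set). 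On this event, both sequences $t_{\lambda}(nx)/n$ and $t_{\lambda'}(nx)/n$ converge, respectively to the deterministic limits $\mu_\lambda(x)$ and $\mu_{\lambda'}(x)$.

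Finally, passing to the limit in the pointwise inequality $t_{\lambda'}(nx)/n\le t_\lambda(nx)/n$ on any sample point of this event of positive probability gives $\mu_{\lambda'}(x)\le\mu_\lambda(x)$, which is the desired monotonicity. There is no real obstacle here; the only care needed is to make sure the two convergences hold on the same sample point, and this is guaranteed by the inclusion of survival events under the coupling. The same argument could alternatively be read directly off the representation \eqref{cestuninf}, provided one first checks the inequality $\Ebarre_{\lambda'}[\sigma_{\lambda'}(nx)]\le\Ebarre_\lambda[\sigma_\lambda(nx)]$, but going through \eqref{basique} is more transparent since the hitting times $t_\lambda(nx)$ are directly monotone in $\lambda$ on the coupled space.
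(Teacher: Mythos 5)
Your proof is correct and follows essentially the same route as the paper's: build both processes on the coupled Harris space (\S 2.2), observe that the hitting times are monotone in $\lambda$, condition on the survival event of the process with the smaller parameter (on which both survival events occur and both a.s.\ convergences from \eqref{basique} hold), and pass to the limit. You simply spell out more explicitly why the two almost-sure convergences hold simultaneously on an event of positive $\P$-probability, which the paper leaves implicit; the argument is otherwise identical.
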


\begin{proof}
Suppose $\lambda_c(\Zd)<\lambda'<\lambda<+\infty$. Choose $\lambda_{\min},\lambda_{\max}$ with $\lambda_c(\Zd)<\lambda_{\min}<\lambda'<\lambda \le \lambda_{\max}$. Use the construction of Subsection \ref{SubS:constr} to build the two contact processes with respective parameters $\lambda$ and $\lambda'$. On the event $\{0\communiquelambdap\infty\}$, which has positive probability, we have that for each $n\ge 1$, $\frac{t_{\lambda}(nx)}{n}\le \frac{t_{\lambda'}(nx)}{n}$.
Letting $n$ go to infinity, we get $\mu_{\lambda'}(x)\le \mu_{\lambda}(x)$.
\end{proof}

\section{Left-Continuity}

We prove here the left-continuity of $\mu_\lambda$. More precisely, we prove that for each $\lambda_0>\lambda_c(\Zd)$, for every $\varepsilon>0$, there exists $\delta>0$ such that
\begin{equation}
\label{E:contgauche}
\forall \lambda \in [\lambda_0- \delta, \lambda_0] \quad \forall x\in \mathbb S^{d-1}\quad  |\mu_{\lambda_0}(x)-\mu_{\lambda}(x)|\le \varepsilon.
\end{equation}
 
\medskip
When proving continuity theorems for the time constant in first passage percolation (see Cox and Kesten \cite{MR588407,MR633228,kesten}),
the left-continuity is usually considered as the easy part, due to the fact that the time constant is an infimum. 
In the case of the contact process, there are extra difficulties,
because contact processes with different intensities can not be coupled in such a way that they die simultaneously.

\begin{lemme}
\label{unemonotonie}
Let $\lambda>\lambda_c(\Zd)$. For each $x \in \Zd$, 
$\displaystyle \miniop{}{\limsup}{\lambda'\to\lambda^{-}}\Ebarre_{\lambda'}(\sigma_{\lambda'}(x))\le \Ebarre_{\lambda}(\sigma_{\lambda}(x)).$
\end{lemme}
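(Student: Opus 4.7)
The plan is to transfer to the unconditioned measure $\P$, use the Poisson coupling of Lemma~\ref{continuite-des-boites}, and introduce a finite-horizon approximation of $\sigma$ to bypass the infinite-time survival criterion built into its definition.

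Fix $\lambda_{\min}\in(\lambda_c(\Zd),\lambda)$ and set $\theta(\lambda'')=\P(\tau_{\lambda''}=+\infty)$. From
\[
\Ebarre_{\lambda'}\sigma_{\lambda'}(x)=\frac{1}{\theta(\lambda')}\,\E\big[\sigma_{\lambda'}(x)\1_{\{\tau_{\lambda'}=+\infty\}}\big]
\]
and \eqref{asigma} with $p=2$, the family $\{\sigma_{\lambda'}(x)\1_{\{\tau_{\lambda'}=+\infty\}}\}_{\lambda'\in[\lambda_{\min},\lambda]}$ is uniformly $\P$-integrable. Together with the classical continuity of $\theta$ on $(\lambda_c(\Zd),+\infty)$, this reduces the statement to
\[
\limsup_{\lambda'\uparrow\lambda}\E\big[\sigma_{\lambda'}(x)\1_{\{\tau_{\lambda'}=+\infty\}}\big]\le\E\big[\sigma_\lambda(x)\1_{\{\tau_\lambda=+\infty\}}\big].
\]

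Introduce the finite-horizon variant
\[
\sigma_\lambda^{(N)}(x)=\min\{u_k^\lambda(x):k\ge 1,\ \tau_\lambda^x\circ\theta_{u_k^\lambda(x)}\ge N\},
\]
which satisfies $\sigma_\lambda^{(N)}(x)\le\sigma_\lambda(x)$ and $\sigma_\lambda^{(N)}(x)\uparrow\sigma_\lambda(x)$ on $\{\tau_\lambda=+\infty\}$ as $N\to+\infty$. Fix $\varepsilon>0$, pick $T$ large (via uniform integrability) and a box $S$ with $H^\lambda_{T+N}\subset S$ with probability $\ge 1-\varepsilon$ (using~\eqref{richard}). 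Lemma~\ref{continuite-des-boites} guarantees that for $\lambda'$ close enough to $\lambda$, the Poisson marks $m_{\lambda'}^e$ and $m_\lambda^e$ coincide on $S\times[0,T+N]$ with probability $\ge 1-\varepsilon$; on this coupling event, the recursions defining $\sigma^{(N)}$ up to time $T$ unfold identically for both parameters, giving the pathwise identity $\sigma_{\lambda'}^{(N)}(x)\1_{\{\sigma_{\lambda'}^{(N)}(x)\le T\}}=\sigma_\lambda^{(N)}(x)\1_{\{\sigma_\lambda^{(N)}(x)\le T\}}$.

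Finally decompose $\sigma_{\lambda'}(x)=\sigma_{\lambda'}^{(N)}(x)+(\sigma_{\lambda'}(x)-\sigma_{\lambda'}^{(N)}(x))$. The gap $\sigma_{\lambda'}(x)-\sigma_{\lambda'}^{(N)}(x)$ is non-zero only when some attempt $u_k^{\lambda'}(x)$ has $\tau_{\lambda'}^x\circ\theta_{u_k^{\lambda'}(x)}\in[N,+\infty)$, an event whose probability decays like $\exp(-BN)$ by \eqref{grosamasfinis}; combined with the uniform moment bound of~\eqref{asigma}, its contribution to the expectation vanishes uniformly in $\lambda'\in[\lambda_{\min},\lambda]$ as $N\to+\infty$. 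Taking expectations and successively letting $\lambda'\uparrow\lambda$ (via the coupling), $N\to+\infty$ (by monotone convergence, using $\sigma^{(N)}_\lambda\uparrow\sigma_\lambda$) and $\varepsilon\to 0$ yields the inequality. The main obstacle throughout is the interplay between the tail event in the definition of $\sigma$ and the finite-time nature of the coupling of Lemma~\ref{continuite-des-boites}; controlling the ``failed attempt'' gap uniformly in $\lambda'$ is where the uniform statements of Proposition~\ref{propuniforme} become crucial.
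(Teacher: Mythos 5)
Your proof is correct in spirit but takes a genuinely different route from the paper; it is worth comparing the two. The paper builds a ``good'' event $G(\lambda')$ on which the \emph{exact} identity $\sigma_{\lambda'}(x)=\sigma_\lambda(x)$ holds: the key ingredient is to place, at the regeneration time $\sigma_\lambda(x)$, a finite-window event $B_L(\lambda')$ (``if the $\lambda'$-progeny is alive after time $L$ then it lives forever'') and to use the $\Pbarre_\lambda$-invariance of $\tilde\Theta_x^\lambda$ from \eqref{invariance} to show that $\tilde\Theta_{x,\lambda}^{-1}(B_L(\lambda'))$ has $\Pbarre_\lambda$-probability close to $1$; the coupling lemma is then applied to a single box of size controlled by $\sigma_\lambda(x)$ itself. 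This elegantly bypasses the infinite-horizon nature of $\sigma$, and as a by-product yields the bound $\P(0\communiquelambda\infty)/\P(0\communiquelambdap\infty)\le (1-\eps)^{-1}$, so the paper never needs to invoke continuity of the survival probability as a black box. You instead invoke the (admittedly true, but not spelled out in the paper's toolbox) continuity of $\lambda\mapsto\P(\tau_\lambda=\infty)$ to reduce to the unconditioned expectation, and then replace $\sigma$ by the finite-horizon proxy $\sigma^{(N)}$: the observation that the recursion defining $\sigma^{(N)}$ up to time $T$ is a local function of the Poisson data in a box $S\times[0,T+N]$ is correct, and the coupling on that box indeed yields the pathwise identity you state (the spatial containment of all offspring processes inside $S$ follows from $\xi^{\lambda',y}_s\circ\theta_u\subset\xi^{\lambda,0}_{u+s}\subset H^\lambda_{T+N}\subset S$). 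Two places are glossed over and would need care in a full write-up. First, the uniform-in-$\lambda'$ control of the gap $\E\big[(\sigma_{\lambda'}-\sigma^{(N)}_{\lambda'})\1_{\{\tau_{\lambda'}=\infty\}}\big]$ requires, beyond \eqref{grosamasfinis}, that $\sup_{\lambda'\in[\lambda_{\min},\lambda_{\max}]}\E[K_{\lambda'}(x)]<\infty$ (or an exponential bound on $K_{\lambda'}$), in order to perform the union bound over failed attempts $k<K_{\lambda'}(x)$; this uniform control is indeed available in \cite{GM-contact} but is not among the estimates you cite. Second, when taking expectations you must also dispose of the tail $\{\sigma^{(N)}_{\lambda'}>T\}$ on the good event; this is harmless via $\sigma^{(N)}_{\lambda'}\le\sigma_{\lambda'}$, Markov's inequality and \eqref{asigma}, but the step should be written out. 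With these two points patched, your argument works; it is somewhat more hands-on than the paper's and does not exploit the regeneration invariance \eqref{invariance}, at the cost of an extra external input (continuity of $\theta$) and a slightly heavier error analysis.
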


\begin{proof}
Fix $\lambda>\lambda_c(\Zd)$. Choose $\lambda_{\min}$ such that $\lambda_c(\Zd)<\lambda_{\min}< \lambda$ and set $\lambda_{\max}=\lambda$. Fix $x \in \Zd$. Use the construction of Subsection \ref{SubS:constr}.
  
In this proof, for $\lambda' \in [\lambda_{\min}, \lambda]$, we note $\sigma_{\lambda'}=\sigma_{\lambda'}(x)$.
Suppose that for every $\lambda' \in [\lambda_{\min}, \lambda]$, we have managed to construct a ``good'' event $G(\lambda')$  such that $\sigma_{\lambda'}=\sigma_{\lambda}$ on $G(\lambda')$. Then, for $\lambda' \in [\lambda_{\min}, \lambda]$,
\begin{align*}
 {\E} \left( \sigma_{\lambda'}, \; 0\communiquelambdap\infty \right)
& = {\E} \left( \sigma_{\lambda'}, \; 0\communiquelambdap\infty, \;G(\lambda') \right) + {\E} \left( \sigma_{\lambda'}, \; 0\communiquelambdap\infty, \; G(\lambda')^c\right)\\
& \le {\E} \left( \sigma_{\lambda}, \; 0\communiquelambdap\infty \right) +  \sqrt{\E \left( \sigma_{\lambda'}^2, \; 0\communiquelambdap\infty \right)} \sqrt{\P ( 0\communiquelambdap\infty,\; G(\lambda' )^c)}.
\end{align*}
Now, using the fact that   $\{0\communiquelambdap +\infty\} \subset  \{0\communiquelambda +\infty\}$ and the control \eqref{asigma} on the moments of $\sigma_{\lambda'}$, we get
\begin{align}
\Ebarre_{\lambda'}(\sigma_{\lambda'}) 
& =  \frac{{\E} \left( \sigma_{\lambda'}, \; 0\communiquelambdap\infty \right) }{\P(0\communiquelambdap\infty)}   \nonumber \\
& \le \frac{\P (0\communiquelambda +\infty)}{\P(0\communiquelambdap +\infty)}\Ebarre_{\lambda}(\sigma_{\lambda}) + \sqrt{\Ebarre_{\lambda'} \left( \sigma_{\lambda'}^2 \right)} \sqrt{\frac{\P (0\communiquelambda +\infty)}{\P(0\communiquelambdap +\infty)}}
\sqrt{ \Pbarre_\lambda (  G(\lambda' )^c)} \nonumber \\
& \le\frac{\P (0\communiquelambda +\infty)} {\P(0\communiquelambdap +\infty)}\left(\Ebarre_{\lambda}(\sigma_{\lambda})+\sqrt{C_2(1+\|x\|^2)}\sqrt{\Pbarre_{\lambda}(G(\lambda')^c)}\right).
\label{E:ebarresigmaprime}
\end{align}
Thus if we prove that  $\frac{\P (0\communiquelambda +\infty)} {\P(0\communiquelambdap +\infty)}$ 
and $\Pbarre_{\lambda}(G(\lambda'))$ are close to $1$ when $\lambda'$ is close to $\lambda$, we can complete the proof. 

We now build the ``good'' event $G(\lambda')$  such that $\sigma_{\lambda'}=\sigma_{\lambda}$ on $G(\lambda')$ and such that $\Pbarre_{\lambda}(G(\lambda'))$ goes to $1$ as $\lambda'$ goes to $\lambda$. 
Since $\sigma_{\lambda}$ is $\Pbarre_{\lambda}$-a.s. finite and $H^{\lambda}_{\sigma_{\lambda}}$ is $\Pbarre_{\lambda}$-a.s. a finite set, we can first choose $M>0$ such that 
\begin{equation}
\label{E:choixM}
\Pbarre_{\lambda}(A_M)\ge 1-\frac{\epsilon}{3},\text{ where } A_M=\{H^{\lambda}_{\sigma_{\lambda}} \subset [-M,M]^d\} \cap \{\sigma_{\lambda}\le M\}.
\end{equation}
Then, estimates \eqref{richard} and \eqref{grosamasfinis} let us choose $L>0$ such that for each $\lambda'\in [\lambda_{\min},\lambda]$
\begin{equation}
\label{E:choixL}
\Pbarre_{\lambda}(B_L(\lambda'))\ge 1-\frac{\epsilon}{3},\text{ where }B_L(\lambda')=\{H^{\lambda'}_L\subset [-CL,CL]^d\}\cap \{L<\tau_{\lambda'}<\infty\}^c.
\end{equation}
Set $S=[-(M+CL),(M+CL)]^d\cap \Zd$ and $t=M+L$.
With Lemma~\ref{continuite-des-boites}, we can choose $\delta>0$ such that
\begin{equation}
\label{E:choixdelta}
\forall \lambda'\in [\lambda-\delta,\lambda] \quad \Pbarre_{\lambda}(\Idem(S,t,\lambda,\lambda'))\ge 1-\epsilon/3.
\end{equation}
Finally, we consider, for every $\lambda'\in [\lambda-\delta,\lambda]$, the event
\begin{align*}
G(\lambda')& =A_M \cap \tilde{\Theta}_{x,\lambda}^{-1}(B_L(\lambda'))
 \cap \Idem(S,t,\lambda,\lambda').
\end{align*}
The choices \eqref{E:choixM}, \eqref{E:choixL} and \eqref{E:choixdelta} we respectively made for $M,L$ and $\delta$, and the invariance property \eqref{invariance} ensure that 
$$\Pbarre_{\lambda}(G(\lambda'))\ge 1-\epsilon.$$
The event $A_M$ says that, apart from the fact that $(x,\sigma_{\lambda})\communiquelambda\infty$,  the time $\sigma_{\lambda}$ is determinated by the configuration of the Poisson processes in the space-time box $[-M,M]^d\times [0,M]$.
The event $\tilde{\Theta}_{x,\lambda}^{-1}(B_L(\lambda'))$ says that, if $(x,\sigma_{\lambda})$ has a progeny for parameter $\lambda'$ still alive at time  $\sigma_{\lambda}+L$, then $(x,\sigma_{\lambda})\communiquelambda\infty$.
The event $\Idem(S,t,\lambda,\lambda')$ says that the infection at rate $\lambda'$ in the box $S\times [0,t]$ behaves exactly like the infection at rate $\lambda$ in the same box. 

Now, on the event $G(\lambda')\cap\{0\communiquelambda \infty\}$,  the point $(x,\sigma_{\lambda})$ has a progeny for parameter $\lambda$ that is still alive at time $\sigma_{\lambda}+L$. But the infections with rate $\lambda$ and $\lambda'$ coincide in the box $S\times [0,t]$, so the point $(x,\sigma_{\lambda})$ has a progeny for parameter $\lambda'$ that is also still alive at times $\sigma_{\lambda}+L$. Then $(x,\sigma_{\lambda})\communiquelambdap\infty$, and it is now easy to see that $\sigma_{\lambda'}=\sigma_{\lambda}$. Note also that
$G(\lambda')\cap \{\tau_{\lambda}=+\infty\} \subset \{ \tau_{\lambda'}=+\infty\} $.
 This gives
\begin{align*}
\frac{\P (0\communiquelambda +\infty)} {\P(0\communiquelambdap +\infty)}&=\frac1{\Pbarre_{\lambda}(0\communiquelambdap +\infty)}\le \frac1{\Pbarre_{\lambda}(G(\lambda'))}\le\frac1{1-\epsilon}
\end{align*}
and, coming back to \eqref{E:ebarresigmaprime}, we see that 
\begin{align*}
\forall \lambda' \in [\lambda-\delta,\lambda] \quad \Ebarre_{\lambda'}(\sigma_{\lambda'}) & \le \frac1{1-\epsilon} \left( \Ebarre_{\lambda}(\sigma_{\lambda})+\sqrt{C_2(1+\|x\|^2)} \sqrt{\epsilon}\right).
\end{align*}
This completes the proof.
\end{proof}

\begin{lemme}
\label{lemmgauche}
For each $x\in\Zd$, $\lambda\mapsto \mu_{\lambda}(x)$ is left-continuous on $(\lambda_c(\Zd),+\infty)$.
\end{lemme}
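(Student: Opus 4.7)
The plan is to combine the infimum representation \eqref{cestuninf} with the monotonicity result (Corollary~\ref{coromonotonie}) and the upper semi-continuity statement of Lemma~\ref{unemonotonie}. Since $\lambda \mapsto \mu_\lambda(x)$ is non-increasing, for $\lambda' < \lambda$ we already have $\mu_{\lambda'}(x) \ge \mu_\lambda(x)$, so $\liminf_{\lambda'\to\lambda^-} \mu_{\lambda'}(x) \ge \mu_\lambda(x)$. The remaining content of the lemma is the matching upper bound $\limsup_{\lambda'\to\lambda^-}\mu_{\lambda'}(x) \le \mu_\lambda(x)$.

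To get this upper bound, fix $\varepsilon>0$. By the infimum representation \eqref{cestuninf} applied to $\lambda$, choose an integer $n \ge 1$ (depending on $\varepsilon$) such that
\begin{equation*}
\frac{M_1+\Ebarre_{\lambda}(\sigma_{\lambda}(nx))}{n} \le \mu_\lambda(x) + \frac{\varepsilon}{2}.
\end{equation*}
Now apply Lemma~\ref{unemonotonie} to the point $nx$ (not $x$): this yields
\begin{equation*}
\limsup_{\lambda'\to\lambda^-}\Ebarre_{\lambda'}(\sigma_{\lambda'}(nx)) \le \Ebarre_{\lambda}(\sigma_{\lambda}(nx)).
\end{equation*}
Hence there exists $\delta>0$ such that for every $\lambda'\in[\lambda-\delta,\lambda]$ (chosen further inside a fixed interval $[\lambda_{\min},\lambda]$ with $\lambda_c(\Zd)<\lambda_{\min}<\lambda$, to apply the preceding results),
\begin{equation*}
\Ebarre_{\lambda'}(\sigma_{\lambda'}(nx)) \le \Ebarre_{\lambda}(\sigma_{\lambda}(nx)) + \frac{n\varepsilon}{2}.
\end{equation*}

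Applying \eqref{cestuninf} to $\lambda'$ with the same fixed integer $n$ then gives, for every such $\lambda'$,
\begin{equation*}
\mu_{\lambda'}(x) \le \frac{M_1+\Ebarre_{\lambda'}(\sigma_{\lambda'}(nx))}{n} \le \frac{M_1+\Ebarre_{\lambda}(\sigma_{\lambda}(nx))}{n} + \frac{\varepsilon}{2} \le \mu_\lambda(x) + \varepsilon.
\end{equation*}
This shows $\limsup_{\lambda'\to\lambda^-}\mu_{\lambda'}(x) \le \mu_\lambda(x)+\varepsilon$, and since $\varepsilon$ was arbitrary, the left-continuity follows. There is no serious obstacle here: the work has already been concentrated in Lemma~\ref{unemonotonie}, and the present lemma simply exploits the fact that $\mu_\lambda(x)$ is an infimum of quantities that are left upper semi-continuous in $\lambda$, which is enough to transfer upper semi-continuity to the infimum itself.
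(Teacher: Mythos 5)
Your proof is correct and takes essentially the same approach as the paper: both use monotonicity (Corollary~\ref{coromonotonie}) for the easy inequality, and combine the infimum representation \eqref{cestuninf} with Lemma~\ref{unemonotonie} applied to $nx$ for the reverse inequality. The only difference is stylistic: the paper writes $\lim_{\lambda'\to\lambda^-}\mu_{\lambda'}(x)$ as a double infimum and exchanges the two infima, whereas you argue directly with an $\varepsilon$-close choice of $n$ and then choose $\delta$; the underlying mechanism (an infimum of functions that are upper semi-continuous from the left is itself upper semi-continuous from the left) is identical.
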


\begin{proof}
Fix $x \in \Zd$. Since, from Corollary \ref{coromonotonie}, the application $\lambda\mapsto \mu_{\lambda}(x)$ is non-increasing on $(\lambda_c(\Zd),+\infty)$, we can define
 $$L=\miniop{}{\lim}{\lambda'\to \lambda^-}\mu_{\lambda'}(x).$$
  Obviously $L\ge \mu_{\lambda}(x)$ and we must prove $L\le \mu_{\lambda}(x)$. Note $\lambda_n=\lambda-1/n$.
Using the representation \eqref{cestuninf} of $\mu_\lambda(x)$ as an infimum, we have
\begin{align*} 
L & = \miniop{}{\inf}{n\ge 1} \mu_{\lambda_n}(x) 
= \miniop{}{\inf}{n\ge 1}\miniop{}{\inf}{k\ge 1} \frac{\Ebarre_{\lambda_n} (\sigma_{\lambda_n}(kx)) +M_1}k\\
&=\miniop{}{\inf}{k\ge 1}\miniop{}{\inf}{n\ge 1}\frac{\Ebarre_{\lambda_n}(\sigma_{\lambda_n}(kx))+M_1}k\\
&=\miniop{}{\inf}{k\ge 1} \left( \frac{M_1}{k} +\miniop{}{\inf}{n\ge 1} 
\frac{\Ebarre_{\lambda_n}(\sigma_{\lambda_n}(kx))}{k} \right)\\
\end{align*}
By Lemma~\ref{unemonotonie}, for each $k$, $\miniop{}{\inf}{n\ge 1}\Ebarre_{\lambda_n}(\sigma_{\lambda_n}(kx)) \le \Ebarre_{\lambda}(\sigma_{\lambda}(kx))$, so
\begin{align*} 
L&\le \miniop{}{\inf}{k\ge 1}\left(\frac{M_1}{k}+\frac{\Ebarre_{\lambda}(\sigma_{\lambda}(kx))}{k}\right)=\mu_{\lambda}(x),
\end{align*}
 which completes the proof.
\end{proof}

The difference between (\ref{E:contgauche}) and Lemma \ref{lemmgauche} is the uniformity of the control. For all $\lambda >0$, since $\mu_{\lambda}$ is a norm and by symmetry of the model, we have for all $x, y \in \Rd$,
$$ \mu_\lambda (x) - \mu_\lambda (y)  \leq \mu_\lambda (x-y) \leq  \|x-y\|_1 \mu_\lambda (e_1) \,,$$
where $e_1 = (1,0,\dots, 0)$. We obtain that $ | \mu_\lambda (x) - \mu_\lambda (y) |  \leq \|x-y\|_1 \mu_\lambda (e_1) $. Fix $\lambda_0\in (\lambda_c, +\infty)$ and $\varepsilon >0$. By Lemma \ref{lemmgauche} we know that $\lim_{\lambda \rightarrow \lambda_0^-} \mu_\lambda (e_1) = \mu_{\lambda_0} (e_1) $, thus there exists $\delta >0$ such that for all $\lambda \in [\lambda_0 - \delta , \lambda_0]$, for all $x,y\in \Rd$, we have $| \mu_\lambda (x) - \mu_\lambda (y) |  \leq 2 \|x-y\|_1 \mu_{\lambda_0 }(e_1) $. We obtain the existence of $\eta >0$ such that for all $x,y\in \Rd$ satisfying $\| x-y \|_1 \leq \eta$, we have
$$ \sup_{\lambda \in [\lambda_0 - \delta , \lambda_0]} \{ | \mu_\lambda (x) - \mu_\lambda (y) |  \}  \leq \varepsilon \,.$$
There exists a finite set of points $y_1,\dots , y_m$ in $\Rd$ such that
$$ \mathbb{S}^{d-1} \subset \bigcup_{i=1}^m \{ x\in \Rd \,:\, \|x-y_i\|_1 \leq \eta \} \,,$$
thus for all $\lambda \in [\lambda_0 - \delta , \lambda_0]$ we obtain
$$ \sup_{x\in \mathbb{S}^{d-1}} |\mu_\lambda (x) - \mu_{\lambda_0} (x)| \leq 2 \eps + \max_{i=1,\dots , m}|\mu_\lambda (y_i) - \mu_{\lambda_0} (y_i)| \,.$$
By homogeneity of $\mu_{\lambda}$, the result of Lemma \ref{lemmgauche} also holds for all $y\in \Rd$, in particular for $y_i, i\in \{1,\dots, m\}$. This concludes the proof of (\ref{E:contgauche}).

We can notice that the previous argument also applies to the study of the right-continuity of $\mu_\lambda$. However, as we will see in the next section, we do not need it since we perform directly the study of the right-continuity of $\mu_\lambda$ uniformly in all directions.

\section{Right-continuity}

We prove here the right-continuity of $\mu_\lambda$. More precisely, we prove that for each $\lambda_0>\lambda_c(\Zd)$, for every $\varepsilon>0$, there exists $\delta>0$ such that
\begin{equation}
\label{E:contdroite}
\forall \lambda \in [\lambda_0, \lambda_0+\delta] \quad \forall x\in\mathbb{S}^{d-1}\quad  |\mu_{\lambda_0}(x)-\mu_{\lambda}(x)|\le \varepsilon.
\end{equation}

As we will see, the right-continuity of the asymptotic shape of the contact process can be obtained by a slight modification of a part of the proof of the large deviations inequality for the contact process established by Garet and Marchand in~\cite{GM-contact-gd}.  

Let $\lambda_0>\lambda_c(\Zd)$ be fixed. Fix $\lambda_{\min},\lambda_{\max}$ with $\lambda_c(\Zd)<\lambda_{\min}\le \lambda_0<\lambda_{\max}$.

Let $\alpha,\epsilon>0$ and $L,N$ be positive integers.
Consider $\lambda \ge \lambda_0$ and close to $\lambda_0$. 
We define the following event, relative to the space-time box $B_N=B_N(0,0)=([-N,N]^d\cap \Zd)\times[0,2N]$:
\begin{eqnarray*}
A_{\lambda,\lambda_0}^{\alpha,L,N,\epsilon}= \left\{\forall (x_0,t_0) \in B_N\quad 
  \xi^{x_0,\lambda}_{\alpha L N-t_0}\circ \theta_{t_0} \subset x_0+(1+\epsilon)(\alpha L N-t_0)\shape{\lambda_0}\right\}\\
 \cap \left\{\forall (x_0,t_0) \in B_N
 \miniop{}{\cup}{0\le s\le\alpha L N-t_0}  \xi^{x_0,\lambda}_{s}\circ \theta_{t_0} \subset ]-LN,LN[^d\right\}.
\end{eqnarray*}
Consider first $A_{\lambda_0,\lambda_0}^{\alpha,L,N,\epsilon}$.
The first part of the event ensures that the descendants, at time $\alpha L N$, of any point $(x_0,t_0)$ in the box $B_N$ are included in $x_0+(1+\epsilon)(\alpha L N)\shape{\lambda}$: it is a sharp control, requiring the asymptotic shape Theorem for parameter $\lambda_0$. The second part ensures that the descendants, at all times in $[0,\alpha L N] $, of the whole box $B_N$ are included in $]-LN,LN[^d$: the bound is rough, only based on the (at most) linear growth of the process with parameter $\lambda_0$. Thus, the "good growth" event $A_{\lambda_0,\lambda_0}^{\alpha,L,N,\epsilon}$ is typical, and it has been proved that

\begin{lemme}[\cite{GM-contact-gd}]
\label{catendvers12}
Fix $\lambda_0>\lambda_c(\Zd)$. 
There exists $\alpha=\alpha(\lambda_0)\in (0,1)$ such that for every $\epsilon \in (0,1)$, every $L_0>0$, there exists an integer $L>L_0$ such that
$$\lim_{N \to +\infty} \P(A_{\lambda_0,\lambda_0}^{\alpha,L,N,\epsilon})=1.$$
\end{lemme}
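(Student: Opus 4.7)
The plan is to handle the two clauses of the event $A^{\alpha,L,N,\epsilon}_{\lambda_0,\lambda_0}$ separately: the rough containment in $(-LN,LN)^d$ via the linear-speed estimate \eqref{richard}, and the sharp containment in $x_0+(1+\epsilon)(\alpha LN-t_0)\shape{\lambda_0}$ via a large-deviation form of the shape theorem. Denote by $C$ the constant from \eqref{richard} at parameter $\lambda_0$, and fix once and for all $\alpha=\alpha(\lambda_0)\in (0,1)$ small enough that $C\alpha<1/2$. With this choice, for every $L\ge 2$ and every $(x_0,t_0)\in B_N$ the nominal speed box $x_0+[-C(\alpha LN-t_0),C(\alpha LN-t_0)]^d$ sits inside $[-N-LN/2,N+LN/2]^d\subset (-LN,LN)^d$, so the rough containment is geometrically compatible with the choice of $\alpha$.

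For the rough containment, I would discretize $t_0\in [0,2N]$ at integer values $k$ and take a union bound over the $O(N^{d+1})$ pairs $(x_0,k)\in ([-N,N]^d\cap\Zd)\times\{0,\ldots,\lceil 2N\rceil\}$. By translation invariance of $\P$ under $T_{x_0}\circ \theta_k$ combined with \eqref{richard}, the probability that the full descendant cone of $(x_0,k)$ up to time $\alpha LN-k+1$ exits the box $x_0+[-C(\alpha LN-k+1),C(\alpha LN-k+1)]^d$ is bounded by $A\exp(-B(\alpha LN-k+1))$. The continuous dependence on $t_0\in [k,k+1]$ is absorbed in this extra unit of time: between two consecutive recovery marks of $x_0$ the descendant cone is monotone in the starting time, and the rare event of a recovery at $x_0$ during $[k,k+1]$ contributes only a bounded factor after summing. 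Altogether, the rough containment fails with probability $O(N^{d+1})\exp(-\Omega(N))$, which tends to zero.

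For the sharp containment I would invoke a large-deviation version of \eqref{leqdeforme}: there exist $\beta,A',B'>0$ depending on $\lambda_0$ and $\epsilon$ such that for all sufficiently large $T$,
$$\P\left(\tau_{\lambda_0}=+\infty,\ H^{\lambda_0}_T\not\subset (1+\epsilon)T\shape{\lambda_0}\right)\le A'\exp(-B'T^\beta).$$
This stretched-exponential shape estimate is precisely the main technical ingredient of \cite{GM-contact-gd}. For each discretized $(x_0,k)\in B_N$ set $T=\alpha LN-k\ge \alpha LN-2N$ and split according to the lifetime of the process starting at $(x_0,k)$: on $\{\tau_{\lambda_0}^{x_0}\circ \theta_k<T/2\}$ the process is empty at time $T$ and containment is automatic; the short-exponential estimate \eqref{grosamasfinis} controls the intermediate event $\{T/2\le \tau_{\lambda_0}^{x_0}\circ \theta_k<+\infty\}$; and on $\{\tau_{\lambda_0}^{x_0}\circ \theta_k=+\infty\}$ the stretched-exponential estimate above applies after translation. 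Choosing $L>L_0$ large enough so that $\alpha LN-2N$ beats the polynomial union bound over the $O(N^{d+1})$ pairs finishes the argument. The main obstacle is the stretched-exponential form of the shape theorem itself; once it is available, the rest is a routine union bound together with translation invariance.
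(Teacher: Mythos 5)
Since the paper offers no proof of Lemma~\ref{catendvers12} but only cites~\cite{GM-contact-gd}, I will comment on the mathematics of your proposal on its own terms. Your treatment of the second (rough) clause is essentially sound: the choice $C\alpha<1/2$, $L\ge 2$ makes the geometry consistent, and a union bound over $O(N^{d+1})$ discretized space--time points against the exponential bound \eqref{richard} closes, with the usual trick of deleting the recovery marks at $x_0$ in $[k,k+1]$ to absorb the continuous dependence on $t_0$.

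The first (sharp) clause is treated circularly. You invoke a stretched-exponential shape estimate $\P(\tau_{\lambda_0}=+\infty,\; H^{\lambda_0}_T\not\subset(1+\eps)T\shape{\lambda_0})\le A'\exp(-B'T^{\beta})$ and describe it as ``the main technical ingredient'' of \cite{GM-contact-gd}. It is not an ingredient: it is (up to the exponent $\beta$) exactly the conclusion that Lemma~\ref{catendvers12} exists to deliver, via the restart/Peierls argument that the present paper recalls immediately after the lemma, namely $\P(A_{\lambda_0,\lambda_0}^{\alpha,L,N,\eps/3})>p_1 \Rightarrow \P(\xi^{0,\lambda_0}_t\not\subset(1+\eps)t\shape{\lambda_0})\le A\exp(-Bt)$ for all $t$. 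No large-deviation rate of any kind for the shape is available at the stage where Lemma~\ref{catendvers12} must be proved; what one has is only the almost sure shape theorem~\eqref{leqdeforme} (with no rate) and the moment bounds~\eqref{asigma} on $\sigma_\lambda$. The substantive content of the lemma is precisely to extract, from those inputs alone, a quantitative rate of convergence strong enough to beat the $N^{d+1}$ factor in the union bound (a polynomial rate of degree exceeding $d+1$ suffices). Your proposal skips exactly that step by assuming a much stronger bound whose proof would, in fact, presuppose the lemma under discussion.
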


Garet and Marchand used Lemma~\ref{catendvers12}
to prove the upper large deviations for the contact process: for every $\lambda_0\in [\lambda_{\min},\lambda_{\max}]$, provided that
 $\alpha=\alpha(\lambda_0)$ is fixed as in Lemma~\ref {catendvers12}, then for $L$ greater than some $L_0=L>L_0(\epsilon,\lambda_0)$, they prove that there exists 
$p_1=p_1(\lambda_0,\epsilon,L)>0$ such that
$$\P(A_{\lambda_0,\lambda_0}^{\alpha,L,N,\epsilon/3})>p_1\Longrightarrow \exists A,B\quad \forall t>0\quad \P(\xi^{0,\lambda_0}_t \not \subset (1+\epsilon)t\shape{\lambda_0})\le A\exp(-B t).$$
The idea of the proof is as follows: a too fast infection from $(0,0)$ to $\Zd \times\{n\}$ uses a too fast path, along which we should find a number of order $\theta n$ of "bad growth" events, \ie translated versions of $(A_{\lambda_0,\lambda_0}^{\alpha,L,N,\epsilon/3})^c$. The proof ends with a Peierls argument: the event $(A_{\lambda_0,\lambda_0}^{\alpha,L,N,\epsilon/3})^c$ is local, thus its translated events are only locally dependent. If their probability is small enough, we can control the probability that there exists a path from $(0,0)$ to $\Zd \times\{n\}$ with at least $\theta n$ "bad growth" events.

Let's come back to the right-continuity. 
Fix $\lambda_0>\lambda_c(\Zd)$ and $\epsilon>0$. Take $\alpha$ given by Lemma~\ref{catendvers12}, $L$ large enough, and $p_1(\lambda_0,\epsilon,L)>0$ as before.
A look at the proof of the Peierl argument in \cite{GM-contact-gd} should convince the reader that for each $\lambda\ge \lambda_0$, we have 
 $$\P(A_{\lambda,\lambda_0}^{\alpha,L,N,\epsilon/3})>p_1\Longrightarrow \exists A,B\quad  \forall t>0\quad \P(\xi^{0,\lambda}_t \not \subset (1+\epsilon)t\shape{\lambda_0})\le A\exp(-B t).$$
Remember that the event $A_{\lambda,\lambda_0}^{\alpha,L,N,\epsilon/3}$ is local. Thus, applying Lemma~\ref{continuite-des-boites} with $S=[-N,N]^d\cap \Zd$ and $t=\alpha L N$, we obtain the existence of $\lambda_1 \in (\lambda_0, \lambda_{\max}]$ such that 
$$\forall \lambda\in [\lambda_0,\lambda_1]\quad\P(A_{\lambda,\lambda_0}^{\alpha,L,N,\epsilon/3})>p_1.$$
Then, it follows that 
$$ \forall \lambda\in [\lambda_0,\lambda_1]\quad \exists A,B\quad  \forall t>0\quad \P(\xi^{0,\lambda}_t \not \subset (1+\epsilon)t\shape{\lambda_0})\le A\exp(-B t).$$
Now, we can deduce from \eqref{uniftau} and \eqref{richard} that
$$ \forall \lambda\in [\lambda_0,\lambda_1]\quad \exists A,B\quad  \forall t>0\quad \Pbarre_{\lambda}(H^{0,\lambda}_t \not \subset (1+\epsilon)t\shape{\lambda_0})\le A\exp(-B t).$$
A detailed proof is provided in~\cite{GM-contact-gd} when deducing (62) from (61).
Fix $\lambda\in[\lambda_0,\lambda_1]$, $\eta>0$ and, using the asymptotic shape result \eqref{leqdeforme}, choose $t$ large enough to have
$A\exp(-B t)<1/2$ and $\Pbarre_{\lambda}((1-\eta)t\shape{\lambda}  \not \subset H^{0,\lambda}_t)<1/2$.
This implies that the event $\{(1-\eta)t\shape{\lambda}\subset H^{0,\lambda}_t\subset (1+\epsilon)t\shape{\lambda_0}\}$ has positive probability; particularly, $(1-\eta)\shape{\lambda}\subset (1+\epsilon)\shape{\lambda_0}$, and, letting $\eta$ tend to $0$, we have

$$\forall \lambda\in[\lambda_0,\lambda_1] \quad \shape{\lambda}\subset (1+\epsilon)\shape{\lambda_0},$$
or equivalently
$$\forall \lambda\in[\lambda_0,\lambda_1] \quad\forall x\in\Rd\quad  \mu_{\lambda_0}(x)\le (1+\epsilon)\mu_{\lambda}(x).$$
 This completes the proof of \eqref{E:contdroite}.


\def\refname{References}
\bibliographystyle{plain}

\end{document}